\newtheorem{thm}{Theorem}
\newtheorem{defn}{Definition}
\newtheorem{pro}{Proposition}
\newtheorem{cor}{Corollary}
\newtheorem{ex}{Example}
\numberwithin{equation}{section} \setcounter{tocdepth}{1}
\newcommand{\B}{{\mathcal B}}
\newcommand{\bea}{\begin{eqnarray}}
\newcommand{\eea}{\end{eqnarray}}
\newcommand{\C}{\mathbb{C}}
\def\C{\mathcal{C}}
\begin{document}

\title[Evolution algebra of a ``chicken'' population]{On subalgebras of an evolution algebra of a ``chicken'' population}

\author{B.A. Omirov,  U. A. Rozikov}

 \address{B. \ A. \ Omirov and U.\ A.\ Rozikov\\ Institute of mathematics,
29, Do'rmon Yo'li str., 100125, Tashkent, Uzbekistan.}
\email {omirovb@mail.ru, \ \ rozikovu@yandex.ru}

\begin{abstract} We consider an evolution algebra which corresponds to a
bisexual population with a set of
females partitioned into finitely many different types
and the males having only one type. For such algebras in
terms of its structure constants we calculate right and plenary
periods of generator elements. Some results on subalgebras of EACP
and ideals on low-dimensional EACP are obtained.
\end{abstract}
\maketitle

{\bf{Key words.}}
Evolution algebra; bisexual population; associative algebra; subalgebra.

{\bf Mathematics Subject Classifications (2010).} 17D92; 17D99; 60J10.

\section{Introduction} \label{sec:intro}

In recent years the non-commutative and non-associative analogies of the classical constructions become interesting in the connection with their applications in many branches of mathematics, biology (population, genetics, etc.) and physics (string theory, quantum field theory, etc.).

An algebraic approach in Genetics consists of the study of various types of genetic algebras (like algebras of free, "self-reproduction" and bisexual populations, Bernstein algebras) \cite{LR}, \cite{ly}. Mendel exploited symbols that are quite algebraically suggestive to express his genetic laws. The evolution of a population comprises a determined change of state in the next generations as a result of reproduction and selection \cite{ly},\cite{m}.

The main problem for a given algebra of a sex linked population is to carefully examine how
the basic algebraic model must be
altered in order to compensate for this lack of symmetry in the genetic inheritance
system. In \cite{e3}  Etherington began the study of this kind of algebras with the
simplest possible case.

Recently in \cite{LR} an evolution algebra $\B$ is introduced identifying the coefficients
of inheritance of a bisexual population as the structure constants
of the algebra.  The basic properties of the algebra are studied.
Moreover a detailed analysis of a special case
of the evolution algebra (of bisexual population in which type ``1''
of females and males have preference) is given. Since the structural constants of the algebra $\B$ are given by two cubic matrices, the study of this algebra is {\bf quite} difficult. To avoid such difficulties one has to consider an algebra of bisexual population with a simplified form of matrices of structural constants. In \cite{LRc} a such simplified model of bisexual population is considered and basic properties of corresponding evolution algebra (called evolution algebras of a ``chicken" population (CEACP)) are studied. In \cite{RM} a notion of chain of EACP is introduced and
several examples (time homogenous, time non-homogenous, periodic, etc.) of such chains are given.

In this paper we calculate right and plenary periods for generator elements of EACP and establish that natural basis of any subalgebra of EACP (which is also a EACP) can be extended to a natural basis of whole algebra. Moreover, we describe one-dimensional subalgebras (in ordinary sense) of EACP. Finally, simplicity of low-dimensional EACP is investigated.

\section{Basic definitions}

Following \cite{LRc} we consider a set  $\{h_i, i=1,\dots,n\}$ (the set of "hen"s) and $r$ (a "rooster").

\begin{defn} \cite{LRc} Let $(\mathcal E, \cdot)$ be an algebra over a field $K$. If it admits a basis $\{h_1,\dots,h_n, r\}$, such that
\begin{equation}\label{4}
\begin{array}{ll}
h_ir = rh_i=\sum_{j=1}^na_{ij}h_j+ b_ir,  \\
h_ih_j =0,\ \ i,j=1,\dots,n; \ \ rr =0 \end{array} \end{equation}
then this algebra is called an evolution algebra of a "chicken" population (EACP).
We call the basis $\{h_1,\dots,h_n, r\}$ a natural basis.
\end{defn}

Thus an algebra EACP, $\mathcal E$, is defined by a rectangular $n\times (n+1)$-matrix
$$M=\left(\begin{array}{ccccc}
a_{11}&a_{12}&\dots&a_{1n}&b_1\\[2mm]
a_{21}&a_{22}&\dots&a_{2n}&b_2\\[2mm]
\vdots&\vdots&\dots&\vdots&\vdots\\[2mm]
a_{n1}&a_{n2}&\dots&a_{nn}&b_n
\end{array}\right),$$
which is called the matrix of structural constants of the algebra $\mathcal E$.

We write the matrix $M$ in the form $M=A\oplus {\bf b}$ where $A=(a_{ij})_{i,j=1,\dots,n}$ and ${\bf b}^T=(b_1,\dots,b_n)$.

Assume we have two rectangular $n\times (n+1)$-matrices $M=A\oplus {\bf b}$ and $H=B\oplus {\bf c}$. Then we define multiplication of such matrices by
\begin{equation}\label{MH}
MH=AB\oplus A{\bf c},  \ \ HM=BA\oplus B{\bf b}.
\end{equation}
We note that this multiplication agrees with usual multiplication of $(n+1)\times (n+1)$-matrices with zero $(n+1)$-th row.

Let $E$ be a commutative algebra, define {\it principal} power of $a\in E$ as
$$a^2=a\cdot a, \ \ a^3=a^2\cdot a, \ \ \dots, \ \ a^n=a^{n-1}\cdot a;$$
and {\it plenary} powers of $a$ as
$$a^{[1]}=
a\cdot a, \ \
a^{[n]}=a^{[n-1]}a^{[n-1]}
,\ \ n\geq 2.$$
Define right multiplication operator by
$$R_a(x)=xa.$$

Let $\mathcal E$ be an EACP with the generator set $\{h_1,h_2,\dots,h_n,r\}$.
We say $h_i$ (or $r$) occurs in $x\in  \mathcal E$ if the coefficient $\alpha_i$ (or $a$)
in $x=\sum_{i=1}^n\alpha_ih_i+ar$ is non-zero. Write $h_i\prec x$ ($r\prec x$).

\begin{defn} Let $h_j$ be a generator of an EACP, the right period $p_j$ of $h_j$ is defined by
$$p_j=\min\{m\in N: h_j\prec R^m_r(h_j)\}.$$ If $p_j=1$, we say $h_j$ is aperiodic; if the set is empty we define $p_j=\infty$.
\end{defn}
\begin{defn} Let $h_j$ be a generator of an EACP, the plenary period $q_j$ of $h_j$ is defined by
$$q_j=\min\{m\in N: h_j\prec (h_jr)^{[m]}\}.$$ If $q_j=1$, we say $h_j$ is aperiodic;
if the set is empty we define $q_j=\infty$.
\end{defn}
\section{Conditions of periodicity}

\begin{pro}\label{p1} For any $m\geq 1$ and for any $i=1,\dots,n$ the following identities hold
\begin{itemize}
\item[(i)] $R^m_r(h_i)=(A^m\mathbf{h})_i+(A^{m-1}{\bf b})_ir;$
\item[(ii)] $(h_ir)^{[m]}=\gamma_m\left[(A^{m+1}\mathbf{h})_i+(A^{m}{\bf b})_ir\right],$
where $\mathbf{h}=\{h_1,\dots,h_n\}$ and $\gamma_m$ satisfies the recurrent equation:
$$\gamma_{m+1}=2\gamma_m^2(A^m{\bf b})_i, \ \ \mbox{with} \ \ \gamma_1=2b_i.$$
\end{itemize}
\end{pro}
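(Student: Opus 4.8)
The plan is to prove both identities by induction on $m$, with the two inductions resting on a single elementary computation. Throughout I abbreviate the defining relation \eqref{4} as $h_l r = r h_l = (A\mathbf{h})_l + b_l r$, where $(A\mathbf{h})_l = \sum_k a_{lk} h_k$. The computation I would isolate first is this: for any element $x = \sum_l c_l h_l + \beta r$, the relations $h_l h_k = 0$, $rr = 0$ and $r h_l = h_l r$ give
$$x\cdot r = \sum_l c_l (h_l r), \qquad x\cdot x = 2\beta \sum_l c_l (h_l r).$$
The first holds because $rr=0$ kills the $r$-component of $x$; the second because, in expanding $x\cdot x$, the $h_l h_k$ and $rr$ terms vanish while the two families of cross terms $c_l \beta (h_l r)$ and $\beta c_l (r h_l)$ coincide and hence double. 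In both cases one then re-expands each $h_l r$ and collapses the index sums through $\sum_l (A^p)_{il} a_{lk} = (A^{p+1})_{ik}$ and $\sum_l (A^p)_{il} b_l = (A^p\mathbf{b})_i$; this is the mechanism that advances the matrix powers.

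For part (i) the base case $m=1$ is exactly \eqref{4}. For the step I write $R_r^{m+1}(h_i) = R_r^m(h_i)\cdot r$ and apply the first identity to $x = R_r^m(h_i) = (A^m\mathbf{h})_i + (A^{m-1}\mathbf{b})_i r$, whose $h$-coefficients are $c_l = (A^m)_{il}$. The term $(A^{m-1}\mathbf{b})_i r$ is annihilated by $rr=0$, so $R_r^{m+1}(h_i) = \sum_l (A^m)_{il}(h_l r)$; re-expanding and collapsing yields $(A^{m+1}\mathbf{h})_i$ for the $h$-part and the freshly generated $r$-part $\sum_l (A^m)_{il} b_l = (A^m\mathbf{b})_i$, which is the claimed form with $m$ replaced by $m+1$.

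For part (ii) the base case is $(h_i r)^{[1]} = (h_i r)(h_i r)$, obtained from the squaring identity with $\beta = b_i$ and $c_l = a_{il}$, giving $2b_i[(A^2\mathbf{h})_i + (A\mathbf{b})_i r]$ and hence $\gamma_1 = 2b_i$. For the step, set $v = (h_i r)^{[m]} = \gamma_m[(A^{m+1}\mathbf{h})_i + (A^m\mathbf{b})_i r]$, so that $(h_i r)^{[m+1]} = v\cdot v = \gamma_m^2 (v/\gamma_m)\cdot(v/\gamma_m)$. Applying the squaring identity to $v/\gamma_m$, whose data are $c_l = (A^{m+1})_{il}$ and $\beta = (A^m\mathbf{b})_i$, produces the overall scalar $\gamma_m^2\cdot 2(A^m\mathbf{b})_i$, the $h$-part $(A^{m+2}\mathbf{h})_i$, and the $r$-part $(A^{m+1}\mathbf{b})_i$. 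Reading off the scalar gives $\gamma_{m+1} = 2\gamma_m^2(A^m\mathbf{b})_i$ and matches the asserted form for $m+1$.

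The only real obstacle is bookkeeping: one must keep the scalar $\gamma_m$ detached from the vector-and-$r$ data across the squaring step, and make sure the doubling factor $2$ in $x\cdot x$ attaches to the correct scalar $\beta = (A^m\mathbf{b})_i$ rather than being absorbed into a matrix power. With the squaring identity recorded once, each part reduces to a two-line induction.
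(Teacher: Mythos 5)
Your proof is correct and takes essentially the same approach as the paper: induction on $m$ for both parts, using the relations $h_lh_k=0$, $rr=0$, $rh_l=h_lr$ to expand products and the collapse $\sum_l(A^p)_{il}a_{lk}=(A^{p+1})_{ik}$, $\sum_l(A^p)_{il}b_l=(A^p\mathbf{b})_i$ to advance the matrix powers, your only departure being organizational (you isolate the squaring and right-multiplication identities once as a lemma rather than recomputing them inline). One cosmetic point: phrase the inductive step of (ii) by pulling the scalar $\gamma_m$ out of both factors by bilinearity, i.e.\ $v\cdot v=\gamma_m^2\,w\cdot w$ with $v=\gamma_m w$, rather than as $(v/\gamma_m)\cdot(v/\gamma_m)$, so that the degenerate case $\gamma_m=0$ is covered without division.
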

\begin{proof}
(i) Compute actions of $R_r$ to the set $\mathbf{h}$:
$$R_r(\mathbf{h})=\{R_r(h_1),\dots,R_r(h_n)\}=
\{h_1r,\dots,h_nr\}=\{(M\mathbf{h})_1,\dots,(M\mathbf{h})_n\}, $$
where
$$(M\mathbf{h})_i=\sum_{j=1}^na_{ij}h_j+b_ir=(A\mathbf{h})_i+b_ir,\ \ i=1,\dots,n.$$
Also we have
$$R^2_r(h_i)=R_r((M\mathbf{h})_i)=\sum_{s=1}^n\sum_{j=1}^na_{ij}a_{js}h_s+\sum_{j=1}^na_{ij}b_jr=(A^2\mathbf{h})_i+(A{\bf b})_ir.$$
Using induction {\bf by} $m$ we get
$$R^m_r(\mathbf{h})=\{(M^m\mathbf{h})_1,\dots,(M^m\mathbf{h})_n\}, $$
where
$$(M^m\mathbf{h})_i=R^m_r(h_i)=(A^m\mathbf{h})_i+(A^{m-1}{\bf b})_ir,\ \ i=1,\dots,n.$$

(ii) Use induction {\bf by} $m\geq 1$. For $m=1$ we have
$$(h_ir)^{[1]}=\left(\sum_{j=1}^na_{ij}h_j+b_ir \right)^2=2b_i\left[(A^{2}\mathbf{h})_i+(A{\bf b})_ir\right].$$
Assume now that the formula (ii) is true for $m$ and prove it for $m+1$:
\begin{equation}\label{m}
(h_ir)^{[m+1]}=\left(\gamma_m\left[(A^{m+1}\mathbf{h})_i+(A^{m}{\bf b})_ir\right]\right)^2=2\gamma_m^2(A^m{\bf b})_i((A^{m+1}\mathbf{h})_ir).
\end{equation}
 Let $A^m=(a_{ij}^{(m)})_{ij=1,\dots,n}$. Then from (\ref{m}) we get
 \begin{equation}\label{m1}
(h_ir)^{[m+1]}=2\gamma_m^2(A^m{\bf b})_i(\sum_{j=1}^na_{ij}^{(m+1)}h_jr)=\gamma_{m+1}\left[(A^{m+2}\mathbf{h})_i+(A^{m+1}{\bf b})_ir\right].
\end{equation}
\end{proof}

As a corollary of this proposition we have
\begin{pro}\label{p2} 1)  The right period of $h_i$ is
$$p_i=\min\{m\in N: a^{(m)}_{ii}\ne 0\}.$$

2) If $b_i=0$ then $q_i=\infty$, otherwise
the plenary period of $h_i$ is
$$q_i=\min\{m\in N: a^{(m+1)}_{ii}\prod_{j=0}^{m-1}(A^j{\bf b})_i\ne 0\},$$
where $A^0=id$.
\end{pro}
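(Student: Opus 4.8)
The plan is to derive both formulas directly from the closed forms obtained in Proposition \ref{p1}, since each period is by definition the smallest $m$ for which the coefficient of $h_i$ in a certain product is nonzero. Thus the whole argument reduces to isolating the coefficient of $h_i$ in $R^m_r(h_i)$ and in $(h_ir)^{[m]}$, and deciding when that coefficient vanishes.

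For part 1) I would start from Proposition \ref{p1}(i), which gives $R^m_r(h_i)=(A^m\mathbf{h})_i+(A^{m-1}{\bf b})_ir$. Writing $A^m=(a^{(m)}_{ij})$, the vector term is $(A^m\mathbf{h})_i=\sum_{j=1}^na^{(m)}_{ij}h_j$, so the coefficient of $h_i$ is exactly $a^{(m)}_{ii}$, while the remaining summand contributes only to $r$. Hence $h_i\prec R^m_r(h_i)$ holds if and only if $a^{(m)}_{ii}\ne 0$, and taking the minimum over $m$ yields $p_i=\min\{m\in N:a^{(m)}_{ii}\ne 0\}$.

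For part 2) I would use Proposition \ref{p1}(ii): $(h_ir)^{[m]}=\gamma_m[(A^{m+1}\mathbf{h})_i+(A^{m}{\bf b})_ir]$. As above, the coefficient of $h_i$ in $(A^{m+1}\mathbf{h})_i$ is $a^{(m+1)}_{ii}$, so the coefficient of $h_i$ in $(h_ir)^{[m]}$ equals $\gamma_m\,a^{(m+1)}_{ii}$, and $h_i\prec (h_ir)^{[m]}$ if and only if $\gamma_m\,a^{(m+1)}_{ii}\ne 0$. It then remains to determine when $\gamma_m\ne 0$, and this is the step I expect to carry the real content. Using the recurrence $\gamma_{m+1}=2\gamma_m^2(A^m{\bf b})_i$ with $\gamma_1=2b_i=2(A^0{\bf b})_i$, an induction on $m$ shows that $\gamma_m\ne 0$ if and only if every factor $(A^j{\bf b})_i$ with $0\le j\le m-1$ is nonzero, i.e.\ if and only if $\prod_{j=0}^{m-1}(A^j{\bf b})_i\ne 0$; the squaring in the recurrence guarantees that a single vanishing factor annihilates all later $\gamma$'s.

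Combining the two observations, $h_i\prec(h_ir)^{[m]}$ if and only if $a^{(m+1)}_{ii}\prod_{j=0}^{m-1}(A^j{\bf b})_i\ne 0$, and the minimum over $m$ gives the stated formula for $q_i$. Finally, the degenerate case is immediate: if $b_i=0$ then $\gamma_1=0$, and the recurrence forces $\gamma_m=0$ for all $m$, so $(h_ir)^{[m]}=0$ and the defining set for $q_i$ is empty, giving $q_i=\infty$. The only subtlety is the bookkeeping of the index shift, namely that the coefficient of $h_i$ sits at level $m+1$ of the matrix power while the scalar $\gamma_m$ collects factors $(A^j{\bf b})_i$ only up to level $m-1$, which is exactly what produces the asymmetric product in the final expression.
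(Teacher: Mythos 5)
Your proof is correct and takes essentially the same route as the paper: both parts are read off from Proposition \ref{p1} by isolating the coefficient of $h_i$, and part 2 reduces to deciding when $\gamma_m$ vanishes. The only cosmetic difference is that the paper solves the recurrence explicitly, getting $\gamma_m=2^{2^{m}-1}\prod_{j=0}^{m-1}(A^j{\bf b})_i^{2^{m-j-1}}$, whereas you track non-vanishing of $\gamma_m$ by induction; since over a field a product is nonzero exactly when every factor is, the two arguments coincide.
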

\begin{proof} 1) This simply follows from the part (i) of Proposition \ref{p1}.

2) Using part (ii) of Proposition \ref{p1} we get
$$(h_ir)^{[m]}=2^{2^{m}-1}\prod_{j=0}^{m-1}(A^j{\bf b})_i^{2^{m-j-1}}\left[(A^{m+1}\mathbf{h})_i+(A^{m}{\bf b})_ir\right].$$
Thus the coefficient of $h_i$ is
$$2^{2^{m}-1}\prod_{j=0}^{m-1}(A^j{\bf b})_i^{2^{m-j-1}}a^{(m+1)}_{ii}.$$
This completes the proof.
\end{proof}

The following proposition reduces an EACP to a simple one.

\begin{pro}\label{p3}\cite{DOR}
Let $\mathcal{C}$ be an EACP, then there exists
a basis $\{h_1, h_2, \dots, h_n, r\}$ such that $\mathcal{C}$
on this basis is represented by the table of multiplication as
follows
$$h_1r=\sum_{j=1}^{n}a_{1j}h_j+\delta r, \ \delta\in\{0, 1\},\qquad
h_ir=\sum_{j=1}^{n}a_{ij}h_j, \quad 2\leq i \leq
n.$$
\end{pro}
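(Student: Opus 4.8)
The plan is to produce the desired normal form purely by changing the ``hen'' part of the natural basis while keeping the rooster $r$ fixed. First I would record how the structure data behave under such a change: if $C=(c_{ik})$ is any invertible $n\times n$ matrix over $K$ and we set $h_i'=\sum_{k}c_{ik}h_k$, then the products $h_i'h_j'$ and $rr$ remain zero by bilinearity (every $h_kh_l$ and $rr$ already vanishes), so $\{h_1',\dots,h_n',r\}$ is again a natural basis of an EACP. A direct computation from \eqref{4} shows that in this basis the relations read $h_i'r=\sum_j a_{ij}'h_j'+b_i'r$ with new structure data $A'=CAC^{-1}$ and ${\bf b}'=C{\bf b}$; in particular the rooster-column vector transforms simply and linearly as ${\bf b}\mapsto C{\bf b}$.

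The key point is then that I only need to control ${\bf b}$, not $A$. If ${\bf b}=0$ there is nothing to do: taking $C=\mathrm{id}$ gives $b_i'=0$ for all $i$, which is the asserted form with $\delta=0$. If ${\bf b}\neq0$, I would extend ${\bf b}$ to a basis $\{{\bf b},v_2,\dots,v_n\}$ of $K^n$ and let $C$ be the inverse of the matrix whose columns are ${\bf b},v_2,\dots,v_n$; then $C{\bf b}=e_1=(1,0,\dots,0)^T$. In the resulting basis we have $b_1'=1$ and $b_i'=0$ for $i\geq2$, i.e. $h_1'r=\sum_j a_{1j}'h_j'+r$ and $h_i'r=\sum_j a_{ij}'h_j'$ for $2\leq i\leq n$, which is exactly the stated table with $\delta=1$.

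The argument is essentially a single linear-algebra normalization, so there is no serious obstacle; the only things that genuinely must be checked are that the new basis is still natural and that the rooster vector transforms as ${\bf b}\mapsto C{\bf b}$, both of which follow immediately from the bilinearity of the product together with \eqref{4}. I should note that the construction works over an arbitrary field $K$, since it uses nothing beyond extending a nonzero vector to a basis and inverting the corresponding change-of-coordinates matrix, and that the matrix $A'=CAC^{-1}$ is left completely free, consistent with the fact that the proposition makes no claim about the $a_{ij}$.
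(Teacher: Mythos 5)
Your proof is correct. The paper itself contains no proof of this proposition (it is quoted from \cite{DOR}), but your argument — keep $r$ fixed, change the hen basis by an invertible $C$ so that the structure data transform as $A\mapsto CAC^{-1}$, ${\bf b}\mapsto C{\bf b}$, then choose $C$ with $C{\bf b}=e_1$ when ${\bf b}\neq 0$ — is exactly the standard normalization used in that reference, where the same $C$ is just built concretely (rescale one hen $h_{i_0}$ with $b_{i_0}\neq 0$ and replace $h_i$ by $h_i-(b_i/b_{i_0})h_{i_0}$ for $i\neq i_0$), so the two proofs coincide in substance.
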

Using this proposition by Proposition \ref{p2} we get
\begin{cor}\label{c1} For EACP mentioned in Proposition \ref{p3} the following hold

a) If $\delta=0$ then $q_i=1$ or $\infty$,

b) If $\delta=1$ then the plenary period of $h_i$ is
$$q_i\in\left\{\begin{array}{ll}
\{1,2,\infty\} \ \ \mbox{if} \ \ i=1\\[3mm]
\{1,\infty\}, \ \ \mbox{if} \ \ i\ne 1.
\end{array}\right.
$$
\end{cor}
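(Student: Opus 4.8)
The plan is to specialize the explicit plenary-period formula of Proposition~\ref{p2}(2) to the reduced structure constants furnished by Proposition~\ref{p3}. In the reduced basis one has $\mathbf{b}=(\delta,0,\dots,0)^T$ with $\delta\in\{0,1\}$, so that $(A^0\mathbf{b})_1=\delta$, $(A^0\mathbf{b})_i=0$ for $i\ge 2$, and $(A^j\mathbf{b})_i=\delta\,a^{(j)}_{i1}$ for $j\ge 1$. Setting $c_m=a^{(m+1)}_{ii}\prod_{j=0}^{m-1}(A^j\mathbf{b})_i$, Proposition~\ref{p2}(2) gives $q_i=\min\{m:c_m\ne 0\}$, and over the field $K$ one has $c_m\ne 0$ exactly when $a^{(m+1)}_{ii}\ne 0$ and $(A^j\mathbf{b})_i\ne 0$ for every $j=0,\dots,m-1$. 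The proof then reduces to deciding, in each case, for which $m$ this chain of non-vanishing conditions can be satisfied.

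Two of the cases are immediate. For part (a), $\delta=0$ forces $\mathbf{b}=0$, hence $b_i=0$ for all $i$, and Proposition~\ref{p2}(2) yields $q_i=\infty$; in particular $q_i\in\{1,\infty\}$. For part (b) with $i\ne 1$, although $b_1=1$ one still has $b_i=0$, i.e.\ $(A^0\mathbf{b})_i=0$; the $j=0$ factor of $c_m$ already vanishes, so $c_m=0$ for all $m\ge 1$ and again $q_i=\infty\in\{1,\infty\}$.

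The substantive case, which I expect to be the main obstacle, is $\delta=1$ with $i=1$, where one must rule out $q_1\in\{3,4,\dots\}$. Here $(A^0\mathbf{b})_1=1$, so for $m\ge 1$ the condition $c_m\ne 0$ becomes $a^{(m+1)}_{11}\ne 0$ together with $a^{(j)}_{11}\ne 0$ for all $1\le j\le m-1$. The decisive structural observation is that this chain forces every intermediate power $a^{(1)}_{11},\dots,a^{(m-1)}_{11}$ to be nonzero yet never constrains $a^{(m)}_{11}$ itself. I would then argue by contradiction: if $q_1\ge 2$ the $m=1$ condition must fail, so $a^{(2)}_{11}=0$; but for each $m\ge 3$ the index $j=2$ lies in $\{1,\dots,m-1\}$, whence $c_m\ne 0$ would require $a^{(2)}_{11}\ne 0$, which is impossible. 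Hence once $q_1\ne 1$ the sole remaining candidate is $m=2$, and if the $m=2$ condition fails as well then $q_1=\infty$. This establishes $q_1\in\{1,2,\infty\}$ and completes the argument.
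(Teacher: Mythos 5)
Your proof is correct, and it is the argument the corollary's preamble actually announces --- specializing the formula of Proposition \ref{p2}(2) to $\mathbf{b}=(\delta,0,\dots,0)^T$ --- rather than the one the paper executes. The paper's own proof works directly with plenary powers: it sets $q_i=1$ whenever $h_i\prec h_ir$ (i.e.\ $a_{ii}\ne 0$), sets $q_1=2$ when $h_1$ occurs in $(h_1r)^{[2]}$, and gets $\infty$ otherwise because an element with no $r$-component squares to zero, so all higher plenary powers vanish. Note that this implicitly counts $h_ir$ itself as the first plenary power; under the paper's stated convention $x^{[1]}=x\cdot x$ --- the convention in which Proposition \ref{p2} is written, whose first clause gives $b_i=0\Rightarrow q_i=\infty$ --- the value $1$ is never attained in case (a) or in case (b) with $i\ne 1$, which is exactly what your computation shows. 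Since the corollary asserts only membership in the listed sets, both arguments prove it: yours is the one consistent with Proposition \ref{p2} as stated (and is sharper in the trivial cases, where it yields $q_i=\infty$ outright), while the paper's, read with its shifted indexing, additionally identifies when the value $1$ is attained. In the only substantive case ($\delta=1$, $i=1$) the two mechanisms are parallel: your observation that for $m\ge 3$ every $c_m$ contains the factor $(A^2\mathbf{b})_1=a^{(2)}_{11}$, which must vanish once $q_1\ne 1$, is the formula-level counterpart of the paper's remark that $(h_1r)^{[m]}=0$ for all $m\ge 3$.
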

\begin{proof} a) If $h_i$ is present in $h_ir$ then $q_i=1$, otherwise since $(h_ir)^{[m]}=0$ for all $m\geq 2$ we get $q_i=\infty$.

b) Case $i=1$. If $h_1$ is present in $h_1r$ then $q_1=1$, otherwise consider $(h_1r)^{[2]}$ if this contains $h_1$ then $q_1=2$, if $(h_1r)^{[2]}$ does not contain $h_1$ then since $(h_1r)^{[m]}=0$ for all $m\geq 3$ we get $q_i=\infty$.

Case $i\ne 1$ is similar to part a).
\end{proof}

\section{Subalgebras of an EACP}

By definition of an EACP we know that this algebra depends on a natural basis $\{h_1, h_2, \dots, h_n, r\}.$
\begin{defn}\label{subalgebra}\cite{LRc} \begin{itemize}
\item[1)] Let $\C$ be an EACP and $\C_1$ be a subspace of $\C$.
If $\C_1$ has a natural basis $\{h_1',h_2',\dots,h_m',r'\}$ with multiplication table like (\ref{4}), then we call $\C_1$ an evolution subalgebra of a CP.

\item[2)] Let $I\subset \C$ be an evolution subalgebra of a CP.
If $\C I\subseteq I$, we call $I$ an evolution ideal of a CP.

\item[3)] Let $\C$ and $\mathcal D$ be EACPs, we say a linear homomorphism $f$
from $\C$ to $\mathcal D$ is an evolution homomorphism, if $f$ is an algebraic map and for
a natural basis $\{h_1,\dots, h_n, r\}$ of $\C$, $\{f(h_1),\dots, f(h_n), f(r)\}$ spans an evolution subalgebra of a CP
in $\mathcal D$. Furthermore, if an evolution homomorphism is one to one and onto, it
is an evolution isomorphism.

\item[4)] An EACP $\C$ is called simple if it has no proper evolution ideals.

\item[5)] $\C$ is called irreducible if it has no proper subalgebras.
\end{itemize}
\end{defn}

In fact, for linear subspace $\C_1$ of an EACP $\C$ we can consider three type of subalgebras:

(i) $\C_1$ is a subalgebra in ordinary sense;

(ii) $\C_1$ is subalgebra and there exists a natural basis of $\C_1;$

(iii) $\C_1$ is subalgebra and there exist a natural basis of $\C_1$ which can be extended to a natural basis of $\C.$

Note that Definition \ref{subalgebra} agrees with the second type of subalgebra.

The following proposition gives equivalence of (ii) and (iii).

\begin{pro} Definitions (ii) and (iii) are equivalent.
\end{pro}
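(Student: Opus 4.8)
The implication (iii) $\Rightarrow$ (ii) is immediate, since a natural basis of $\C_1$ that extends to one of $\C$ is in particular a natural basis of $\C_1$. The whole content is (ii) $\Rightarrow$ (iii), and the plan is to prove something slightly stronger: the extension can always be realized \emph{with the hen-space of the originally chosen natural basis of $\C$ left unchanged}, only the rooster of $\C_1$ (and, in a degenerate case, the role played by one of its basis vectors) being re-selected.

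First I would record a coordinate-free description of the product. Fix a natural basis $\{h_1,\dots,h_n,r\}$ of $\C$, let $H=\mathrm{span}(h_1,\dots,h_n)$, let $\pi\colon\C\to K$ read off the coefficient of $r$, and let $\widetilde{\varphi}\colon\C\to\C$ be the linear map with $\widetilde{\varphi}(h_i)=h_ir$ and $\widetilde{\varphi}(r)=0$. Expanding an arbitrary product using $h_ih_j=0$, $rr=0$ and $h_ir=rh_i$ gives, for all $x,y\in\C$,
\[
xy=\pi(y)\widetilde{\varphi}(x)+\pi(x)\widetilde{\varphi}(y),\qquad\text{in particular}\qquad x^2=2\pi(x)\widetilde{\varphi}(x).
\]
Since $\mathrm{char}\,K\neq2$, the crucial consequence is that $x^2=0$ forces $\pi(x)=0$ or $\widetilde{\varphi}(x)=0$; moreover any subspace on which $\pi$ vanishes is totally isotropic (all its products vanish). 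I would also note the characterization that a basis of an EACP with one vector singled out is a natural basis \emph{exactly} when the remaining vectors span an isotropic subspace and the singled-out vector squares to zero, the relations $h_ir\in\C$ being automatic.

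Next I would exploit that \emph{every} vector $e$ of the given natural basis $\{h_1',\dots,h_m',r'\}$ of $\C_1$ satisfies $e^2=0$, hence $\pi(e)\widetilde{\varphi}(e)=0$. If $\pi$ vanishes on all of $h_1',\dots,h_m',r'$, then $\C_1\subseteq H$, so $\C_1$ has zero multiplication; I extend any basis of $\C_1$ to a basis of $H$ and adjoin $r$, obtaining a natural basis of $\C$ that contains a (necessarily natural) basis of $\C_1$. Otherwise some basis vector $e_0$ has $\pi(e_0)\neq0$, and then $\widetilde{\varphi}(e_0)=0$, so $e_0^2=0$ and $e_0\notin H$, giving $\C=H\oplus Ke_0$. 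Taking $e_0$ as rooster and a basis of the $m$-dimensional space $\C_1\cap H$ as hens yields a natural basis of $\C_1$ (the hens lie in the isotropic $H$, and $e_0^2=0$, while closure of $\C_1$ supplies $h_i'e_0\in\C_1$); extending the chosen basis of $\C_1\cap H$ to a basis of $H$ and adjoining $e_0$ produces a natural basis of $\C$ containing it.

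The step I expect to be most delicate is not any computation but the bookkeeping of \emph{which} vector plays the rooster: the given rooster $r'$ of $\C_1$ need not be usable as the rooster of $\C$ — indeed in the zero-multiplication case $\C_1\subseteq H$ the rooster of $\C_1$ is forced to become one of the hens of $\C$ — so one must allow re-selection of the distinguished element of $\C_1$. The observation that unlocks everything is that the constraint $e^2=0$ on the natural-basis vectors of $\C_1$ forces any vector of $\C_1$ with non-zero $r$-coordinate into $\ker\widetilde{\varphi}$, which is precisely what makes it eligible to serve as a rooster complementary to the fixed hen-space $H$.
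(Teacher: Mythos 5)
Your proof is correct, and at bottom it builds the same object as the paper's proof: hens of $\C_1$ chosen inside the hen-space $H$, a common rooster which is either $r$ itself or a square-zero element of $\C_1$ lying outside $H$, and a basis extension inside $H$ to finish. But your organization is genuinely different and is an improvement. The paper works in coordinates, writing $f_i=\sum_j\alpha_{ij}h_j+\gamma_i r$ and $r'=\sum_j\beta_jh_j+\gamma r$, and splits into three cases: $\gamma\neq0$, handled by the elimination $f_i'=f_i-(\gamma_i/\gamma)r'$ (whose span is exactly your $\C_1\cap H$) followed by a Steinitz exchange; $\gamma=0$ with all $\gamma_i=0$, which is your first case; and $\gamma=0$ with some $\gamma_i\neq0$, which it disposes of by declaring $f_i$ the new rooster and ``reducing to the first case.'' Your two-case dichotomy --- either $\pi$ vanishes on the whole natural basis of $\C_1$, or some basis vector $e_0$ has $\pi(e_0)\neq0$ --- merges the paper's first and third cases, and this buys actual rigor at the paper's loosest step: in the third case the relabelled basis need not itself be natural, since its new hens include $r'$ together with the $f_j$, $j\neq i$, and $r'f_j=\gamma_j\widetilde{\varphi}(r')$ can be nonzero when $\widetilde{\varphi}(r')\neq 0$ and more than one $\gamma_j$ is nonzero; so the paper's ``reduction'' really means ``run the case-one construction, which only uses that the designated rooster squares to zero and has nonzero $r$-coefficient,'' which is precisely the point your coordinate-free formulation makes explicit. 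One blemish: your hypothesis $\mathrm{char}\,K\neq2$ appears nowhere in the paper and is in fact unnecessary --- you invoke it only to deduce $\widetilde{\varphi}(e_0)=0$, but your construction never uses $\widetilde{\varphi}(e_0)=0$, only $e_0^2=0$ (which holds simply because $e_0$ is a vector of a natural basis of $\C_1$) and $e_0\notin H$; deleting that detour makes your argument valid over any field, as the paper's is.
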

\begin{proof} Part $(iii)\Rightarrow (ii)$ is straightforward. We shall prove $(ii)\Rightarrow (iii)$.
Let $\C_1=\{f_1,f_2,\dots,f_m,r'\}$ be a subalgebra of $\C=\{h_1,\dots,h_n,r\}$ in sense (ii).
We shall show that the natural basis of $\C_1$ can be extended to a natural basis of $\C$.
We have
\begin{equation}\label{f}
\begin{array}{ll}
f_i=\sum_{j=1}^n\alpha_{ij}h_j+\gamma_i r,  \ \ i=1,\dots,m,\\[2mm]
r'=\sum_{j=1}^n\beta_jh_j+\gamma r.
\end{array}
\end{equation}
{\it Case} $\gamma\ne 0$. Take the following change of the basis
$$ f_i'=f_i-{\gamma_i\over \gamma}r', \quad 1 \leq i \leq m, \quad r''=r'.$$
This new basis also is a natural basis, moreover the vectors $f'_i$ do not contain $r$ in their
decompositions. Thus vectors $\{f'_1,\dots,f'_m\}$ generate a subspace in the vector space
generated by $\{h_1,\dots,h_n\}$. Then using theorem about change of basis (see \cite{W}) we can replace
 $\{h_{i_1},\dots, h_{i_m}\}$ by $\{f'_1,\dots,f'_m\}$. Moreover $r$ can be replaced by $r'$.  Hence for $\gamma\ne 0$ we can extend the natural basis of $\C_1$ to the natural basis of $\C$.

{\it Case} $\gamma=0$ and $\gamma_i=0$ for all $i$. In this case all $f_i$ and $r'$ do not depend on $r$. So
we can again use theorem about change of basis and replace $\{h_{i_1}, \dots , h_{i_m}, h_{i_{m+1}}\}$ by $\{f_1,\dots,f_m,r'\}$.

{\it Case}  $\gamma=0$ and $\gamma_i\ne 0$ for some $i$. By change $X_i=r'; X_j=f_j, j\ne i; r''=f_i$ we reduce this case to the first case. This completes the proof.

\end{proof}

The following is an example of a subalgebra (as in (i))
of $\C$, which is not an evolution subalgebra of a CP (as in (ii)).

\begin{ex}\cite{LRc} Let $\C$ be an EACP over a field $\mathbb{K}$ with basis $\{h_1,h_2,h_3,r\}$ and multiplication
defined by $h_ir=h_i+r$, $i=1,2,3$. Take $u_1=h_1+r$, $u_2=h_2+r$. Then
$$(au_1+bu_2)(cu_1+du_2)=acu_1^2+(ad+bc)u_1u_2+bdu_2^2=(2ac+ad+bc)u_1+(2bd+ad+bc)u_2.$$
Hence, $F=\mathbb{K}u_1+\mathbb{K}u_2$ is a subalgebra of $\C$, but it is not an evolution subalgebra of a CP. Indeed,
assume $v_1, v_2$ be a basis of $F$. Then $v_1=au_1+bu_2$ and $v_2=cu_1+du_2$ for some $a,b,c,d\in \mathbb{K}$ such that
$D=ad-bc\ne 0$. We have $v_1^2=(2a^2+2ab)u_1+(2b^2+2ab)u_2$ and $v_2^2=(2c^2+2cd)u_1+(2d^2+2cd)u_2$. We must have $v_1^2=v_2^2=0$, i.e.
$$ a^2+ab=0, \ \ b^2+ab=0, \ \ c^2+cd=0, \ \ d^2+cd=0.$$

From this we get $a=-b$ and $c=-d$. Then $D=0$, a contradiction. If $a=0$ then $b=0$ (resp. $c=0$ then $d=0$), we reach the same contradiction. Hence $v_1^2\ne 0$ and $v_2^2\ne 0$, and consequently $F$ is not an evolution subalgebra of a CP.
\end{ex}

In sequel for a subalgebra we mean a subalgebra in the sense (iii).

\begin{pro}\label{pp} Let $\C$ be an EACP over $\mathbb{R}$ with basis $\{h_1,\dots,h_n,r\}$ and matrix of structural constants $M=A\oplus {\bf b}$. If
${\rm rank} A=n,$ then any subalgebra of $\C$ has the form $\{f_1,\dots,f_m, ar\},$ where $0\leq m\leq n$, $a\in \{0,1\}$ and
$$f_i=\sum_{j=1}^n\alpha_{ij}h_j, \ \ \alpha_{ij}\in \mathbb{R}, i=1,\dots,m.$$
\end{pro}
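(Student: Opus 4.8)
The plan is to start from the already-established equivalence of (ii) and (iii), so that any subalgebra $\C_1$ comes equipped with a natural basis $\{f_1,\dots,f_m,r'\}$ obeying a multiplication table of type \eqref{4}. Writing each basis vector in the ambient natural basis,
$$f_i=\sum_{j=1}^n\alpha_{ij}h_j+\gamma_i r,\qquad r'=\sum_{j=1}^n\beta_j h_j+\gamma r,$$
I reduce everything to the constraints forced by the identities $f_if_j=0$, $r'r'=0$ and $f_ir'\in\C_1$. First I would record the general product rule in $\C$: for $x=\sum_i x_ih_i+x_0r$ and $y=\sum_j y_jh_j+y_0r$, the relations $h_ih_j=rr=0$ and $h_ir=(A\mathbf{h})_i+b_ir$ give
$$xy=\sum_{j=1}^n(\mathbf c^TA)_jh_j+(\mathbf c^T\mathbf b)r,\qquad c_k=x_ky_0+x_0y_k.$$

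The key step is to feed the hypothesis $\operatorname{rank}A=n$ into this formula: since $A$ is then an invertible $n\times n$ matrix, any vanishing product $xy=0$ forces $\mathbf c^TA=0$ and hence $\mathbf c=0$. Applying this to $f_if_j=0$ yields $\gamma_j\boldsymbol\alpha_i+\gamma_i\boldsymbol\alpha_j=0$ for all $i,j$, where $\boldsymbol\alpha_i=(\alpha_{i1},\dots,\alpha_{in})$, and applying it to $r'r'=0$ yields $\gamma\boldsymbol\beta=0$. The diagonal case $i=j$ of the first relation produces the dichotomy $\gamma_i=0$ or $\boldsymbol\alpha_i=0$ for each hen, and the second relation produces $\gamma=0$ or $\boldsymbol\beta=0$ for the rooster; this is precisely what separates the ``$h$-part'' from the ``$r$-part''.

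Then I would run a short case analysis. If all $\gamma_i=0$, every $f_i$ is already a pure $h$-combination; if moreover $\gamma\neq0$ then $\boldsymbol\beta=0$ and $r'=\gamma r$, which I rescale to $r$ (so $a=1$), while if $\gamma=0$ then $r'$ is itself a pure $h$-combination and a direct computation gives $f_ir'=0$, so $r'$ may be relabelled as an extra hen and $a=0$. If instead some $\gamma_{i_0}\neq0$, then $\boldsymbol\alpha_{i_0}=0$, so $f_{i_0}=\gamma_{i_0}r$; substituting this into the off-diagonal relations forces $\boldsymbol\alpha_j=0$ for every $j$, so all hens are multiples of $r$ and linear independence collapses the count to $m=1$. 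Independence of $f_1$ and $r'$ then forces $\gamma=0$, i.e. $r'$ is a pure $h$-combination; swapping the roles of $f_1$ (now the rooster) and $r'$ (now the hen) — legitimate since $f_1r'=a'_{11}f_1+b'_1r'$ keeps a type-\eqref{4} table — and rescaling puts the subalgebra into the claimed form with $a=1$. In every branch the hens lie in the $n$-dimensional space $\operatorname{span}(h_1,\dots,h_n)$, giving $m\le n$.

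The step I expect to be the main obstacle is not any single computation but controlling the bookkeeping of \emph{which} element plays the rooster: the hypothesis only pins down the $h/r$ split of the spanning vectors, and one genuinely must allow a swap of the nominal rooster with a hen (followed by a rescaling) to reach the normal form $ar$ with $a\in\{0,1\}$. Checking that such a swap, and that rescaling a basis vector, again yield a multiplication table of type \eqref{4} is the part that has to be verified with care rather than waved through.
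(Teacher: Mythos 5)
Your proposal is correct and takes essentially the same approach as the paper's proof: the paper likewise expands the natural-basis vectors of the subalgebra in $\{h_1,\dots,h_n,r\}$ and uses the vanishing of their squares together with ${\rm rank}\,A=n$ to conclude that each basis vector is either a pure $h$-combination or a multiple of $r$. Your additional steps (the off-diagonal products, the rooster swap and the rescaling to $a\in\{0,1\}$) only spell out bookkeeping that the paper leaves implicit.
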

\begin{proof} Let ${\tilde \C}=\{\varphi_1,\dots,\varphi_m\}$ be a subalgebra of $\C$. Then we have
  $$\varphi_i=\sum_{k=1}^n\beta_{ik}h_k+\beta_ir, \ \ i=1,\dots,m.$$
 Since $\varphi_i^2=0$, then we have
\begin{equation}
  2\beta_i\sum_{k=1}^n\beta_{ik}h_kr=2\beta_i\left(\sum_{k=1}^n\sum_{s=1}^n\beta_{ik}a_{ks}h_s+\sum_{k=1}^n\beta_{ik}b_kr\right)=0.
\end{equation}
Hence $\beta_i=0$ or
\begin{equation}\label{s}
\sum_{k=1}^n\beta_{ik}a_{ks}=0 \ \mbox{for \ any \ } \  s \ \ \mbox{and}  \ \ \sum_{k=1}^n\beta_{ik}b_k=0.
\end{equation}
Since ${\rm rank} A=n$ from (\ref{s}) we get $\beta_{ik}=0$ for all $k$.
Hence $\varphi_i$ is equal to $\beta_i r$ or to $\sum_{k=1}^n\beta_{ik}h_k$. This completes the proof.
\end{proof}

\begin{pro}\label{pp} Let $\C$ be an EACP  with matrix of structural constants $M=A\oplus {\bf b}$.
Then $\mathcal{X}=\langle x\rangle$, where $0\ne x=y+\beta r=\sum_{i=1}^n\alpha_ih_i+\beta r$ generates an one-dimensional subalgebra if one of the following conditions is satisfied
\begin{itemize}

\item[a.] $\beta=0$ or $Ay=0$, ${\bf b}y=0$.

 \item[b.] $\beta\ne 0$, ${\bf b}y=1$ and $y$ is an eigenvector of $A$ with eigenvalue $1/\beta$.
\end{itemize}
\end{pro}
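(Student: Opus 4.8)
The plan is to reduce the assertion to a single scalar identity. Since $\X=\langle x\rangle$ is one-dimensional, the product of two of its elements $ax$ and $cx$ equals $ac\,x^2$, so $\X$ is a subalgebra (in the ordinary sense) if and only if $x^2\in\langle x\rangle$, that is, $x^2=\lambda x$ for some scalar $\lambda$. Thus it suffices, in each of the two listed cases, to exhibit such a $\lambda$.

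First I would compute $x^2$ directly from the multiplication table (\ref{4}). Writing $x=y+\beta r$ with $y=\sum_{i=1}^n\alpha_ih_i$ and using commutativity of $\C$ together with $h_ih_j=0$ and $rr=0$, the terms $y^2$ and $r^2$ vanish and the two cross terms coincide, leaving
$$x^2=(y+\beta r)^2=2\beta\,(yr).$$
Next I would expand $yr$ in the natural basis. From $h_ir=\sum_{j}a_{ij}h_j+b_ir$ one obtains
$$yr=\sum_{j=1}^n\Big(\sum_{i=1}^n\alpha_ia_{ij}\Big)h_j+\Big(\sum_{i=1}^n\alpha_ib_i\Big)r,$$
whose hen-part is precisely the element $Ay$ (the image of $y$ under $h_i\mapsto\sum_ja_{ij}h_j$) and whose rooster-coefficient is the scalar $\mathbf{b}y=\sum_i\alpha_ib_i$. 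Hence $yr=Ay+(\mathbf{b}y)\,r$ and
$$x^2=2\beta\big(Ay+(\mathbf{b}y)\,r\big).$$

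With this formula the two cases are immediate. In case (a), if $\beta=0$ then $x^2=0\in\langle x\rangle$; and if $Ay=0$ and $\mathbf{b}y=0$ then $yr=0$, so again $x^2=0$. In both situations $\lambda=0$ works. In case (b), the eigenvector hypothesis $Ay=\frac{1}{\beta}y$ together with $\mathbf{b}y=1$ gives $yr=\frac{1}{\beta}y+r$, whence
$$x^2=2\beta\Big(\frac{1}{\beta}y+r\Big)=2(y+\beta r)=2x,$$
so $\X$ is a subalgebra with $\lambda=2$.

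I do not expect a genuine obstacle, since the proof is a direct computation; the only delicate point is the bookkeeping in the splitting $yr=Ay+(\mathbf{b}y)\,r$ and the observation that in case (b) the normalization $\mathbf{b}y=1$ is exactly what cancels the outer factor $2\beta$ against the eigenvalue $1/\beta$ to recover $x$. As a check on completeness, matching the $h_j$- and $r$-coefficients in $x^2=\lambda x$ when $\beta\neq0$ forces $\lambda=2\mathbf{b}y$ and $Ay=(\mathbf{b}y/\beta)y$, which after rescaling $x$ within its line so that $\mathbf{b}y=1$ is exactly condition (b) (the subcase $\mathbf{b}y=0$ recovering the second alternative of (a)); so these sufficient conditions are in fact also necessary up to scaling inside $\langle x\rangle$.
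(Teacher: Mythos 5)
Your proof is correct, and its starting reduction is the same one the paper uses: $\langle x\rangle$ is a one-dimensional subalgebra exactly when $x^2=\lambda x$, i.e.\ when $x$ is, up to rescaling, an absolute nilpotent or an idempotent of $\C$. Where you diverge is in how this condition is settled: the paper stops at the reduction and simply invokes Propositions 3.4 and 3.5 of \cite{LRc}, which classify the absolute nilpotents and idempotents of an EACP, whereas you inline that classification by computing $x^2=2\beta\,yr=2\beta\bigl(Ay+({\bf b}y)\,r\bigr)$ and checking each case. This buys two things. First, your argument is self-contained, with no dependence on the external reference. Second, your closing coefficient-matching remark --- that $x^2=\lambda x$ with $\beta\neq 0$ forces $\lambda=2{\bf b}y$ and $Ay=({\bf b}y/\beta)\,y$, so that after normalizing ${\bf b}y=1$ the listed conditions are also \emph{necessary} up to rescaling inside $\langle x\rangle$ --- upgrades the proposition's one-way ``if'' into a characterization, something the paper obtains only implicitly through the cited classification. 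One small caution: with the paper's convention $(A\mathbf{h})_i=\sum_j a_{ij}h_j$, the hen-part of $yr$ has coefficient row vector $\alpha A$ rather than $A$ applied to the coefficient column vector; your reading of ``$Ay$'' as the image of $y$ under the linear map $h_i\mapsto\sum_j a_{ij}h_j$ is the interpretation under which the statement is true, so this is a matter of fixing the notation of the statement, not a flaw in your argument.
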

\begin{proof} An arbitrary $x=\sum_{i=1}^n\alpha_ih_i+\beta r$ generates a subalgebra iff $x^2=cx$ for some $c$.
Here one can consider only the case $c=0$ and $c=1$.  Thus $x$ generates a subalgebra iff it is an absolute nilpotent or idempotent of $\C$. Now the proof follows from Propositions 3.4 and 3.5 of \cite{LRc}.
\end{proof}

\begin{pro}\label{pi} Let $\C$ be an EACP as in Proposition \ref{p3}, $\delta=1$ and with matrix of structural constants $M=A\oplus {\bf b}$.
Then $\mathcal{X}=\langle x\rangle$, where $x=\sum_{i=1}^n\alpha_ih_i+\beta r$ generates an one-dimensional ideal iff
one of the following conditions is satisfied
\begin{itemize}

\item[a.] $\beta=\alpha_1=\sum_{i=2}^na_{i1}\alpha_i=0$ and $x$ (with $\alpha_1=0$) is an eigenvector of $A_1$ with a real eigenvalue, where $A_1=\left(a_{ij}\right)_{i,j=2,\dots,n}$ is the minor of the matrix $A$.

\item[b.] $\beta=1$ and $\alpha_j=a_{1j}$ and $a_{kj}=0$, for all $k=2, \dots, n$, $j=1, \dots, n$.
\end{itemize}
\end{pro}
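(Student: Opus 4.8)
The statement characterises one-dimensional ideals, so the plan is to work directly from the ideal condition $\C\mathcal{X}\subseteq\mathcal{X}$. Since $\C$ is commutative and spanned by $\{h_1,\dots,h_n,r\}$, while $\mathcal{X}=\langle x\rangle=\mathbb{R}x$ is one-dimensional, this condition is equivalent to the finite list of membership requirements $rx\in\mathbb{R}x$ and $h_kx\in\mathbb{R}x$ for $k=1,\dots,n$. First I would compute these products from the multiplication table of Proposition \ref{p3} with $\delta=1$, i.e. $b_1=1$ and $b_i=0$ for $i\geq 2$. A direct calculation gives
$$rx=\sum_{j=1}^{n}\Big(\sum_{i=1}^{n}\alpha_i a_{ij}\Big)h_j+\alpha_1 r,\qquad h_kx=\beta\sum_{j=1}^{n}a_{kj}h_j+\beta b_k r,$$
so that $h_1x=\beta\big(\sum_j a_{1j}h_j+r\big)$ and $h_kx=\beta\sum_j a_{kj}h_j$ for $k\geq 2$. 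I note in passing that $x^2=2\beta\, rx$, so once $rx\in\mathbb{R}x$ is secured the subalgebra condition $x^2\in\mathbb{R}x$ comes for free.

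Then I would split on whether $\beta$ vanishes. In the case $\beta=0$ every $h_kx$ vanishes and lies in $\mathbb{R}x$ automatically, so the only constraint is $rx=cx$ for some scalar $c$. Matching the coefficient of $r$ forces $\alpha_1=0$; matching the coefficient of $h_1$ then yields $\sum_{i=2}^{n}a_{i1}\alpha_i=0$, while matching the coefficients of $h_j$ for $j\geq 2$ states exactly that $(\alpha_2,\dots,\alpha_n)$ is an eigenvector of the minor $A_1$ (in the left-hand sense $\sum_{i\geq 2}\alpha_i a_{ij}=c\alpha_j$ dictated by convention (\ref{4})) belonging to the eigenvalue $c$. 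Since $c$ must be a scalar of the base field, over $\mathbb{R}$ this forces the eigenvalue to be real, which reproduces condition (a).

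In the case $\beta\neq 0$ I would first rescale $x$ so that $\beta=1$; this does not change $\mathcal{X}=\mathbb{R}x$. Now $h_1x=\sum_j a_{1j}h_j+r$ must equal a multiple $c_1x$, and comparing the $r$-coefficients forces $c_1=1$, hence $\alpha_j=a_{1j}$ for every $j$. For $k\geq 2$ the product $h_kx=\sum_j a_{kj}h_j$ has zero $r$-component, so the corresponding multiple $c_k$ must vanish, forcing $a_{kj}=0$ for all $j$. Substituting these relations back into the formula for $rx$ gives $rx=a_{11}x$, so the remaining requirement $rx\in\mathbb{R}x$ holds automatically; this yields condition (b). Conversely, since each step above is an equivalence, conditions (a) and (b) are also sufficient.

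The computations are routine once the table is applied; the genuinely delicate points are bookkeeping ones. The first is the interaction in case (a) between the forced relation $\alpha_1=0$ and the $j=1$ coefficient, which peels off the separate scalar condition $\sum_{i=2}^{n}a_{i1}\alpha_i=0$ instead of an eigenvalue equation. The second is the reality of the eigenvalue: the scalar $c$ lives in the field, so one must insist on a \emph{real} eigenvector of $A_1$, and this is the point that must be stated carefully to align with (a), together with reconciling the left-versus-right eigenvector convention coming from $h_ir=\sum_j a_{ij}h_j$. Finally one should confirm that the normalisation $\beta=1$ in case (b) loses no generality, which is immediate since it merely rescales the generator of the same subspace.
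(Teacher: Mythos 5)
Your proposal is correct and takes essentially the same approach as the paper: the paper also imposes $xy=cx$, reduces to the coefficient system (its equation (\ref{o})) and splits on $\beta=0$ versus $\beta\neq 0$, specializing in case (b) to exactly the basis elements $y=h_k$, $y=h_1$, $y=r$ that you use, and obtaining in case (a) the same left-eigenvector condition $\sum_{i\geq 2}\alpha_i a_{ij}=c\alpha_j$ that you derive. The only difference is presentational: you reduce to basis elements from the outset by bilinearity, while the paper carries a general $y=\sum_i\gamma_i h_i+\nu r$ before specializing.
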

\begin{proof} Take an arbitrary element $y=\sum_{i=1}^n\gamma_ih_i+\nu r\in \C$ we should have $xy\in \mathcal{X}$, i.e. there exists $c$ such that $xy=cx$. The last equality is equivalent to
\begin{equation}\label{o}
\left\{\begin{array}{ll}
\sum_{i=1}^n(\nu\alpha_i+\beta\gamma_i)a_{ij}=c\alpha_j; \ \ j=1,2,\dots,n\\[3mm]
\nu \alpha_1+\beta \gamma_1=c\beta.
\end{array}
\right.\end{equation}

a. For case $\beta=0$ if $\nu=0$ then in (\ref{o}) one can take $c=0$. If $\nu\ne 0$ then $\alpha_1=0$ and
 $$\begin{array}{ll}
\nu\sum_{i=2}^n\alpha_ia_{ij}=c\alpha_j; \ \ j=2,\dots,n\\[3mm]
\sum_{i=2}^n\alpha_ia_{i1}=0.
\end{array}
$$
This completes the proof of a.

b. In the case $\beta\ne 0$ one can take $\beta=1$.
For $y=h_k$, $k=2,\dots,n$ from (\ref{o}) for some $c=c_k$ we get
the system $a_{kj}=c_k\alpha_j$, $j=1,\dots,n$ and $c_k=0$. This implies $a_{kj}=0$ for all $k=2,\dots,n$ and $j=1,\dots,n$. In case $y=h_1$ we get the system $a_{1j}=c_1\alpha_j$, $j=1,\dots,n$ and $c_1=1$. Hence $a_{1j}=\alpha_j$. Taking into account
the above obtained results, for $y=r$ we get $\alpha_1a_{1j}=c\alpha_j$ and $\alpha_1=c$. Thus we proved that if $A$ has the following form
$$A=\left(\begin{array}{cccccc}
\alpha_1&\alpha_2&\dots&\alpha_n\\[2mm]
0&0&\dots&0\\[2mm]
\vdots&\vdots&\vdots&\vdots\\[2mm]
0&0&\dots&0
\end{array}
\right)$$
then there are $c_k$ and $c$ such that $xy=c_kx$ if $y=h_k$ and $xy=cx$ if $y=r$. Using this result for an arbitrary $y=\sum_{i=1}^n\gamma_ih_i+\nu r\in \C$ we obtain
$$xy=\sum_{i=1}^n\gamma_ixh_i+\nu xr=(\sum_{i=1}^n\gamma_ic_i+c)x=Cx.$$
Thus $\mathcal{X}=\langle x=\sum_{i=1}^n\alpha_ih_i+r\rangle$ is an ideal of the algebra $\C$ with matrix
$$M=\left(\begin{array}{cccccc}
\alpha_1&\alpha_2&\dots&\alpha_n&1\\[2mm]
0&0&\dots&0&0\\[2mm]
\vdots&\vdots&\vdots&\vdots&\vdots\\[2mm]
0&0&\dots&0&0
\end{array}
\right)$$

\end{proof}

\section{Simple three-dimensional complex EACPs}

In the following theorem the classification of three
dimensional EACP is presented.

\begin{thm}\label{tt2}\begin{itemize}

\item[1.] \cite{LRc} Any  2-dimensional, non-trivial EACP $\C$ is isomorphic to
one of the following pairwise non isomorphic algebras:
\begin{itemize}
\item[$\C_1$:]\ \ $rh =hr=h$,  \ \ $h^2=r^2 = 0$,
\item[$\C_2$:]\ \ $rh=hr={1\over 2}(h+r)$, \ \ $h^2=r^2 = 0$.
\end{itemize}
\item[2.]\cite{DOR}
 An arbitrary three dimensional complex EACP
$\mathcal{C}$ is isomorphic to one of the following pairwise
non-isomorphic algebras

If dim $\mathcal{C}^2=1$ then
$$\begin{array}{ll} \mathcal{C}_1: & h_1r=\frac{1}{2}r\\[1mm]
\mathcal{C}_2: & h_1r=\frac{1}{2}h_2;\\[1mm]
\mathcal{C}_3: & h_1r=\frac{1}{2}h_1+\frac{1}{2}r.
\end{array}
$$

If dim $\mathcal{C}^2=2$ then
$$\begin{array}{llll}
\mathcal{C}_4:& h_1r=\frac{1}{2}(h_1+h_2),&
h_2r=\frac{1}{2}h_2;\\[1mm]
\mathcal{C}_5(\beta): & h_1r=\frac{1}{2}h_1,&
h_2r=\frac{\beta}{2}h_2,& \beta\neq0;\\[1mm]
\mathcal{C}_6(\alpha, \beta): & h_1r=\frac{1}{2}(\alpha h_1+\beta
h_2+r),& h_2r=\frac{1}{2}h_1;\\[1mm]
\mathcal{C}_7(\alpha): & h_1r=\frac{1}{2}(\alpha h_1+r), &
h_2r=\frac{1}{2}h_2;\\[1mm]
\mathcal{C}_8: & h_1r=\frac{1}{2}(h_1+h_2+r), &
h_2r=\frac{1}{2}h_2.\end{array}$$
where one of non-zero parameter
$\alpha, \beta$ in the algebra $\mathcal{C}_6(\alpha, \beta)$ can
be assumed to be equal to 1.
\end{itemize}
\end{thm}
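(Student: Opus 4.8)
The plan is to turn the classification into a normal-form problem for the structure matrix $M=A\oplus\mathbf{b}$ under the group of admissible changes of natural basis, organized by the numerical invariant $\dim\mathcal{C}^2$. The first observation is that every product in $\mathcal{C}$ is a combination of the $h_ir$: writing $x=\sum_i\alpha_ih_i+ar$ and $y=\sum_i\beta_ih_i+cr$, relations (\ref{4}) give $xy=\sum_i(c\alpha_i+a\beta_i)(h_ir)$. Hence $\mathcal{C}^2=\operatorname{span}\{h_ir\}$ is the row space of $M$, so $\dim\mathcal{C}^2=\operatorname{rank}M$. For $n=1$ this rank equals $1$ in the nontrivial case, and for $n=2$ it equals $1$ or $2$; this already accounts for the two blocks of the statement. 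Next I would apply Proposition \ref{p3} to pass to a basis with $\mathbf{b}=(\delta,0,\dots,0)^T$, $\delta\in\{0,1\}$, leaving only $\delta$ and the $n\times n$ block $A$ to be normalized.

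The technical core is to identify the residual symmetry group fixing this normal form. Imposing $(h_i')^2=0$ and $(r')^2=0$ on a prospective new natural basis, through the product formula above, shows the admissible moves are: linear changes of basis inside $H=\langle h_1,\dots,h_n\rangle$, which conjugate $A$; rescalings $r\mapsto tr$, which scale $A$; and translations of $r$ by vectors annihilated by $A$. For $n=1$ this group collapses to the diagonal scalings $(a,b)\mapsto(ta,pb)$ together with the relabeling $h\leftrightarrow r$ exchanging $a\leftrightarrow b$; the two orbits $b=0$ and $ab\neq0$ then give $\mathcal{C}_1$ and $\mathcal{C}_2$, completing part~1.

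For $n=2$ I would stratify by $\delta$ and by $\operatorname{rank}A$. When $\delta=0$ all products lie in $H$ and $r$ acts as the operator $A$, so the isomorphism problem becomes classification of $A$ up to scaling-similarity; over $\mathbb{C}$ this is the Jordan form modulo an overall scalar, yielding the nilpotent block ($\mathcal{C}_2$ in rank $1$, $\mathcal{C}_4$ in rank $2$) against the diagonalizable cases, with the eigenvalue ratio surviving as the genuine modulus $\beta$ in $\mathcal{C}_5(\beta)$. When $\delta=1$ the vector $\mathbf{b}$ is rigidly attached to $h_1$, the stabilizer is smaller, and one computes orbits of the augmented pair $(A,\mathbf{b})$ according to whether $A$ is diagonal, a single Jordan block, or of companion shape with $a_{21}\neq0$; this produces $\mathcal{C}_3,\mathcal{C}_6(\alpha,\beta),\mathcal{C}_7(\alpha),\mathcal{C}_8$, the leftover scaling being spent to set one of $\alpha,\beta$ equal to $1$ in $\mathcal{C}_6$. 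Pairwise non-isomorphism I would then certify by invariants that visibly separate the list: $\dim\mathcal{C}^2$; the scaling-similarity class (Jordan shape and eigenvalue ratio) of $R_r$; whether $\mathcal{C}$ is nilpotent, e.g.\ whether $\mathcal{C}^3=0$ (this is what separates $\mathcal{C}_1$ from $\mathcal{C}_2$, whose $R_r$ are both nilpotent of rank one); the existence of nonzero idempotents; and the period data of Proposition \ref{p2} and Corollary \ref{c1}.

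The step I expect to be the main obstacle is handling the continuous moduli, and within them the $\delta=1$ stratum. Already in the $\delta=0$ diagonal case one meets the family $\mathcal{C}_5(\beta)$ and must spot the relabeling symmetry $\beta\leftrightarrow1/\beta$ coming from swapping $h_1$ and $h_2$. The harder point is $\delta=1$, where the admissible group no longer acts on $A$ by plain similarity but on the augmented pair $(A,\mathbf{b})$, so the orbit computation producing $\mathcal{C}_6(\alpha,\beta)$ and $\mathcal{C}_7(\alpha)$ is genuinely more delicate. One must realize these families faithfully and prove that no residual transformation collapses distinct parameter values — precisely the place where a careless normalization would either fuse non-isomorphic algebras or miss a needed identification.
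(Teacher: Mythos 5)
First, a point of reference: the paper itself contains no proof of Theorem \ref{tt2} --- part 1 is quoted from \cite{LRc} and part 2 from \cite{DOR} --- so your proposal has to be judged on its own merits rather than against an argument in this text. Your opening observations are sound (the identity $xy=\sum_i(c\alpha_i+a\beta_i)h_ir$, hence $\dim\mathcal{C}^2=\operatorname{rank}M$, and the $n=1$ classification). The genuine gap is in the step you yourself flagged as the technical core: your description of the admissible changes of natural basis is too small, and this makes your key reduction false. You allow only linear changes inside $H=\langle h_1,\dots,h_n\rangle$, rescalings of $r$, and translations of $r$ by vectors killed by $A$. But a change of natural basis is \emph{any} invertible linear map whose new basis again satisfies (\ref{4}), and such maps can mix, or even swap, $r$ with elements of $H$ when the structure constants are degenerate --- indeed for $n=1$ you needed exactly the swap $h\leftrightarrow r$ to finish part 1, and then you silently discarded such moves for $n=2$. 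Consequently it is \emph{not} true that for $\delta=0$ ``the isomorphism problem becomes classification of $A$ up to scaling-similarity'', nor is the stratification by $\delta$ isomorphism-invariant. Concretely, let $D$ be the $\delta=0$ algebra with $h_1r=\frac{1}{2}h_1$, $h_2r=0$, i.e. $A=\operatorname{diag}(\frac{1}{2},0)$, $\mathbf{b}=0$. The map $h_1\mapsto r$, $r\mapsto h_1$, $h_2\mapsto h_2$ carries $D$ isomorphically onto $\mathcal{C}_1$ (where $h_1r=\frac{1}{2}r$, so $A=0$, $\mathbf{b}\neq 0$): every defining relation is preserved. Yet $\operatorname{diag}(\frac{1}{2},0)$ is scaling-similar neither to $0$ nor to the nilpotent Jordan block, and $D$ and $\mathcal{C}_1$ lie in different $\delta$-strata; so your scheme would report $D$ as a ninth, ``new'' isomorphism class (a would-be $\mathcal{C}_5(0)$), contradicting both the completeness and the pairwise non-isomorphism asserted in the theorem. (A smaller symptom of the same issue: the matrix $A$ of $\mathcal{C}_4$ is an \emph{invertible} single Jordan block, not a ``nilpotent block in rank $2$''.)

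To repair the argument you must work with the true equivalence relation. Writing $h_i'=\sum_j P_{ij}h_j+p_ir$ and $r'=\sum_j q_jh_j+qr$, the new family is a natural basis iff the transformation is invertible, $\sum_k\left(P_{ik}p_j+p_iP_{jk}\right)h_kr=0$ for all $i,j$, and $q\sum_k q_kh_kr=0$. These constraints depend on $M$ itself, so the available moves grow precisely on the degenerate strata (rank-deficient $M$), which is exactly where the extra identifications such as $D\cong\mathcal{C}_1$ occur; the orbit analysis is therefore an unavoidable case-by-case computation on the pair $(A,\mathbf{b})$ --- essentially what \cite{DOR} carries out --- and cannot be shortcut by Jordan theory for $A$ alone together with the normal form of Proposition \ref{p3}.
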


The following theorem describes simple and not simple EACP listed in Theorem \ref{tt2}.
\begin{thm}\begin{itemize}
\item[a.] The two-dimensional algebra $\C_1$ and $\C_2$ are not simple.

\item[b.] The three-dimensional algebra $\C_i$ is not simple for $i=1,2,3,4,5,7,8$ and $i=6$ for $\beta=0$. Moreover, $\C_6(\alpha,\beta)$  is simple for $\beta\ne 0$.
\end{itemize}
\end{thm}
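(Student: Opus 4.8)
The plan is to treat each algebra from Theorem \ref{tt2} separately, exhibiting a proper evolution ideal for the non-simple cases and proving the absence of one for $\C_6(\alpha,\beta)$ with $\beta\neq 0$. For the non-simple direction I would look for a one-dimensional evolution ideal, since by Proposition \ref{pi} (after reducing to the normal form of Proposition \ref{p3}) we have an explicit characterization of when $\langle x\rangle$ is an ideal. Concretely, for each listed algebra I would read off the matrix $A\oplus\mathbf b$ and then search for an $x=\sum\alpha_ih_i+\beta r$ satisfying condition (a) or (b) of Proposition \ref{pi}: either $\beta=0$ with $x$ an eigenvector of the minor $A_1$ having a real (here, complex) eigenvalue, or $\beta=1$ with $A$ in the special rank-one form. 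For the two-dimensional algebras $\C_1$ and $\C_2$, and the three-dimensional $\C_1,\C_2,\C_3$ where $\dim\C^2=1$, the smallness of $\C^2$ itself should force a proper ideal — for instance $\C^2$ or an obvious $h_i$-span — so these are essentially immediate.

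For the $\dim\C^2=2$ cases $\C_4,\C_5,\C_7,\C_8$ and $\C_6(\alpha,0)$, the approach is to produce an explicit proper evolution ideal and verify $\C I\subseteq I$ directly from the multiplication table. In most of these the subspace $\langle h_2\rangle$ (or another single generator, or a coordinate subspace together with $r$) is absorbing: one checks that multiplying the proposed generator by each basis element $h_1,h_2,r$ lands back in the subspace, and that the subspace carries a natural basis in the sense of Definition \ref{subalgebra}. The key observation making this work is that whenever $A_1$ (or $A$) has a genuine invariant coordinate direction visible in the table, the corresponding $h_i$-span is an evolution ideal. I would tabulate, algebra by algebra, which generator to take, keeping the verification to the three products $xh_1,xh_2,xr$.

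The substantive part is showing that $\C_6(\alpha,\beta)$ is simple for $\beta\neq 0$, i.e.\ that it has no proper evolution ideal. Here I would argue by contradiction: suppose $I$ is a proper evolution ideal. Since a nonzero evolution ideal is in particular a subalgebra with a natural basis, any generator $x\in I$ must square to zero (it is an absolute nilpotent up to scaling), which via $x^2=0$ constrains $x$ exactly as in the proof of the first Proposition \ref{pp} and via condition (a)/(b) of Proposition \ref{pi}. The multiplication for $\C_6(\alpha,\beta)$ is $h_1r=\tfrac12(\alpha h_1+\beta h_2+r)$, $h_2r=\tfrac12 h_1$, so $A=\tfrac12\begin{pmatrix}\alpha&\beta\\1&0\end{pmatrix}$ and $\mathbf b^T=(\tfrac12,0)$. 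The crucial point is that when $\beta\neq 0$ the minor $A_1=(0)$ (the single entry $a_{22}=0$) forces condition (a) to yield only $x\in\langle h_2\rangle$, but I would then check that $\langle h_2\rangle$ is \emph{not} absorbing because $h_2r=\tfrac12 h_1\notin\langle h_2\rangle$, and condition (b) fails because $A$ is not of the required rank-one shape when $\beta\neq 0$. Ruling out all candidate generators in this way shows no proper evolution ideal exists. The main obstacle I anticipate is the simplicity proof: I must be careful that ``no one-dimensional evolution ideal'' does not automatically give ``no proper evolution ideal of any dimension,'' so I would either argue that in dimension three a proper ideal is one- or two-dimensional and handle the two-dimensional case by showing such an $I$ would still have to contain an absolute-nilpotent generator of the forbidden shape, or invoke that every nonzero evolution ideal contains an idempotent or absolute nilpotent and reduce to the generator analysis above.
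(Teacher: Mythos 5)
Your treatment of the non-simple cases is sound and is essentially the paper's own argument: the paper simply tabulates the coordinate subspaces $D_1=\langle h_1\rangle,\dots,D_6=\langle r\rangle$ and checks absorption against each multiplication table, which is what your explicit-ideal search amounts to. The genuine gap is in the simplicity proof for $\C_6(\alpha,\beta)$, $\beta\neq 0$, and it sits exactly at the point you flag as a worry: two-dimensional evolution ideals cannot be ``reduced to the generator analysis above.'' In $\C_6(\alpha,\beta)$ with $\beta\neq 0$ one computes $x^2=\gamma\left[(\alpha_1\alpha+\alpha_2)h_1+\alpha_1\beta h_2+\alpha_1 r\right]$ for $x=\alpha_1h_1+\alpha_2h_2+\gamma r$, so the absolute nilpotents are precisely all of ${\rm span}(h_1,h_2)$ together with the multiples of $r$; there is no ``forbidden shape'' at the level of individual elements. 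Consequently a two-dimensional candidate such as $I=\langle ah_1+bh_2,\,r\rangle$ has perfectly admissible natural-basis generators, and Proposition \ref{pi}, which characterizes when a \emph{single} element spans an ideal, imposes no constraint whatsoever on whether this two-dimensional subspace absorbs multiplication. Both of your proposed shortcuts (exhibiting a forbidden nilpotent inside $I$, or quoting that every ideal contains a nilpotent or idempotent) therefore collapse: containing absolute nilpotents is automatic and harmless here.

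What is actually required, and what the paper does, is the two-dimensional absorption computation itself. Using the rank-$A$ proposition (valid since $\det A=-\beta/4\neq 0$), every proper subalgebra has the form $\tilde\C_6=\{ah_1+bh_2,\,\delta r\}$; for $\delta=1$ the paper checks that $(ah_1+bh_2)r\in\tilde\C_6$ forces $b=-\frac{a}{2}\left(\alpha\mp\sqrt{\alpha^2+4}\right)$, and then $rh_1\in\tilde\C_6$ forces $\alpha\sqrt{\alpha^2+4}=\alpha^2+2$, which has no solution. This computation is the heart of the proof and is absent from your outline. (A shorter completion along your lines does exist, but it is still a genuinely two-dimensional argument: if $r\in I$ then $h_2r=\frac{1}{2}h_1\in I$ gives $h_1\in I$, then $h_1r\in I$ gives $\beta h_2\in I$, so $I=\C_6$; if instead $I\subseteq{\rm span}(h_1,h_2)$, then $(ah_1+bh_2)r\in I$ forces $a=0$, whence $h_2\in I$, $h_1\in I$, and $h_1r\notin{\rm span}(h_1,h_2)$, a contradiction.) A further minor slip: condition (a) of Proposition \ref{pi} also demands $\sum_{i\geq 2}a_{i1}\alpha_i=0$, and since $a_{21}=\frac{1}{2}\neq 0$ this rules out $\langle h_2\rangle$ outright; your reading of that condition as ``yielding $x\in\langle h_2\rangle$'' is inconsistent with the proposition being an equivalence, though your direct check that $h_2r\notin\langle h_2\rangle$ reaches the correct conclusion anyway.
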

\begin{proof} a. It is easy to see that $\langle h\rangle\lhd \C_1$ and $\langle h+r\rangle\lhd \C_2$.

b. Consider some possible subalgebras (in sense (iii)) of $\C=\{h_1,h_2,r\}$:
$$D_1=\{h_1\}, \ \ D_2=\{h_1, h_2\}, \ \ D_3=\{h_1,r\},$$
$$D_4=\{h_2\}, \ \ D_5=\{h_2, r\}, \ \ D_6=\{r\}.$$
It is easy to check that
$$D_j=\left\{\begin{array}{llllllll}
is\ \ ideal\ \ for\ \ \C_1 \ \ if\ \ j=3,4,5,6 \ \ and \ \ is \ \ not\ \ ideal\ \ if\ \ j=1,2;\\[2mm]
is\ \ ideal\ \ for\ \ \C_2\ \ if\ \ j=2,4,5 \ \ and \ \ is\ \ not\ \ ideal \ \ if\ \ j=1,6;\\[2mm]
is\ \ ideal\ \ for\ \ \C_3\ \ if\ \ j=3,4 \ \ and \ \ is\ \ not\ \ ideal \ \ if\ \ j=1,2,5,6;\\[2mm]
is\ \ ideal\ \ for\ \ \C_4\ \ if\ \ j=2,4,5 \ \ and \ \ is\ \ not\ \ ideal \ \ if\ \ j=1,6;\\[2mm]
is\ \ ideal\ \ for\ \ \C_5\ \ if\ \ j=1,2,4, \ \ and \ \ is\ \ not\ \ ideal \ \ if\ \ j=3,5,6;\\[2mm]
is\ \ not\ \ ideal \ \ for\ \ \C_6\ \ if \ \ j=1,2,4,6;\\[2mm]
is\ \ ideal\ \ for\ \ \C_7\ \ if\ \ j=4 \ \ and \ \ is\ \ not\ \ ideal \ \ if\ \ j=1,2,3,5,6;\\[2mm]
is\ \ ideal\ \ for\ \ \C_8\ \ if\ \ j=4 \ \ and \ \ is\ \ not\ \ ideal \ \ for\ \ j=1,2,5,6.\\[2mm]
\end{array}\right.$$

Now consider $\C_6$:

{\it Case} $\beta=0$.  In this case $D_3$ will be an ideal, i.e. $\C_6(\alpha,0)$ is not simple.

{\it Case} $\beta\ne 0$. This $\beta$ can be reduced to $\beta=1$. We have ${\rm rank} A=2$. So we can use Proposition \ref{pp}:  consider a general subalgebra $\tilde\C_6=\{ah_1+bh_2, \delta r\}$. For $\delta=0$ it is easy to see that $\tilde\C_6\C_6\not\subset \tilde\C_6$. If $\delta=1$ then
$$\tilde\C_6\C_6=\{(a\alpha+b)h_1+ah_2+ar, \alpha h_1+h_2+r, h_1\}.$$
Simple calculations show that $ (a\alpha+b)h_1+ah_2+ar\in \tilde\C_6$ iff $b=-{a\over 2}\cdot(\alpha\mp\sqrt{\alpha^2+4})$.
For this value of $b$ one gets  $\alpha h_1+h_2+r\in \tilde \C_6$ iff $\alpha\sqrt{\alpha^2+4}=\alpha^2+2$. But the last equation has not solution. Hence $\C_6(\alpha,\beta)$ is simple for any $\beta\ne 0$.
\end{proof}

\section*{ Acknowledgements}

The work supported by the Grant No.0251/GF3 of Education and Science Ministry of Republic
of Kazakhstan. U.Rozikov thanks Aix-Marseille University Institute for Advanced Study IM\'eRA
(Marseille, France) for support by a residency scheme.

{}

\begin{thebibliography}{99}

\bibitem{DOR} A. Dzhumadil'daev, B.A. Omirov, U.A. Rozikov, On a class of evolution algebras of ``chicken" population. {\it Inter. Jour. Math.} 25(8) (2014) 1450073 (19 pages).

\bibitem{e3} I.M.H. Etherington, Non-associative algebra and the symbolism of genetics,
{\it Proc. Roy. Soc. Edinburgh}, 61  (1941) 24--42.

\bibitem{GMR} R.N. Ganikhodzhaev, F.M. Mukhamedov, U.A. Rozikov, Quadratic stochastic operators and processes: results and open problems.  {\it Inf. Dim. Anal. Quant. Prob. Rel. Fields}., 14(2) (2011), 279--335.

\bibitem{LR} M. Ladra, U.A. Rozikov,  Evolution algebra of a bisexual population,  {\it J. Algebra}. 378 (2013) 153--172.

\bibitem{LRc} A. Labra, M.Ladra, U.A. Rozikov, An evolution algebra in
  population genetics. {\it Linear Algebra Appl.} 457 (2014) 348--362.

\bibitem{ly} Y.I. Lyubich, Mathematical structures in population
genetics, Springer-Verlag, Berlin, 1992.

\bibitem{m} M.L. Reed, Algebraic structure of genetic inheritance,  {\it Bull. Amer. Math.
Soc}. (N.S.)  34 (2) (1997) 107--130.

\bibitem{RM} U.A. Rozikov, Sh.N. Murodov, Chain of evolution algebras of ``chicken" population. {\it Linear Algebra Appl.} 450 (2014) 186--201.

\bibitem{t} J.P. Tian, Evolution algebras and their applications,
Lecture Notes in Mathematics, 1921, Springer-Verlag, Berlin, 2008.

\bibitem{W} B.L.van der Waerden. Algebra. Springer-Verlag, 1971.
\end{thebibliography}
\end{document}